\newtheorem{thm}{Theorem}[section]
\newtheorem{prop}[thm]{Proposition}
\newtheorem{defn}[thm]{Definition}
\newtheorem{conj}[thm]{Conjecture}
\numberwithin{equation}{section}
\begin{document}
 
\thispagestyle{plain}

\title{Lobsters with an almost perfect matching are graceful}

\author{Elliot Krop}
\address{Elliot Krop  \texttt{ElliotKrop@clayton.edu}}

\address{Department of Mathematics \\
Clayton State University \\
Morrow, GA 30260, USA}

\begin{abstract}
Let $T$ be a lobster with a matching that covers all but one vertex. We show that in this case, $T$ is graceful.
\\[\baselineskip] 
	2010 Mathematics Subject Classification: 05C78
\\[\baselineskip]
	Keywords: graceful labeling, , graceful tree conjecture, matching
\end{abstract}

\date{\today}

\maketitle

\section{Introduction} 
Let $T$ be a tree on $n$ vertices. Define a weight on an edge as the absolute difference of the labels of its incident vertices. The graceful tree conjecture (GTC), which has been attributed variously to Anton Kotzig, Gerhard Ringel, and Alexander Rosa states
\begin{conj}\cite{Rosa}
It is possible to label the vertices of $T$ uniquely from $0$ to $n-1$, so that the set of weights of $T$ is $\{1,2, \dots, n-1\}$.
\end{conj}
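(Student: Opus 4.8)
Since the full conjecture is the famous open problem, the plan is to establish its conclusion for the class named in the title: a lobster $T$ carrying a matching $M$ that covers all but one vertex. Write $n=2m+1$ (the order is forced to be odd, since $M$ has exactly $m$ edges and leaves a single vertex $z$ uncovered), so the goal is to produce an injection $V(T)\to\{0,1,\dots,2m\}$ whose induced weights are exactly $\{1,\dots,2m\}$, i.e.\ a graceful labeling in the sense of Conjecture 1.1. First I would record the structural rigidity that a near-perfect matching imposes: rooting $T$ at an end of its spine, every leaf other than $z$ must be matched to its parent, so each vertex carries at most one matched leaf, and the two-level lobster shape collapses to a spine whose branch vertices each dangle a single matched leaf (with $z$ the unique exception). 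These constraints are exactly what make the class tractable while the general lobster case remains open.

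The labeling strategy is to exploit the bipartition $V(T)=X\sqcup Y$ together with $M$. I would aim for an $\alpha$-type labeling in which a threshold $\lambda$ separates low labels $\{0,\dots,\lambda\}$ from high labels $\{\lambda+1,\dots,2m\}$, every matched pair straddles the threshold, and the matching edges carry the large weights while the non-matching edges carry the complementary small weights, so that the two families tile $\{1,\dots,2m\}$ (on a path this specializes to the matching edges receiving the even weights and the remaining edges the odd ones). Concretely, I would contract the matching to form the tree $T'=T/M$ on $m+1$ vertices, traverse $T'$ so that each super-vertex attaches to the already-processed part by exactly one non-matching edge, and assign labels with two pointers---one ascending from $0$, one descending from $2m$---expanding each super-vertex into its matched pair (one low label, one high label) and placing $z$ at the central label $m$.

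Correctness would be proved by induction on $m$, equivalently by peeling a matched pendant pair together with the single edge attaching it, gracefully labeling the smaller lobster, and then reinserting the pair at the extreme unused labels. The invariant to maintain is that after processing the first $k$ super-vertices the labels used form the intended low and high prefixes, while the weights realized form a known sub-interval of $\{1,\dots,2m\}$. The hard part will be the second level of the lobster: unlike a caterpillar, a branch vertex may itself carry a matched leaf, so a naive sweep along the spine does not automatically preserve the interval invariant, and I expect the crux to be choosing the traversal order and the low/high assignment at each branch so that no weight is repeated and none is skipped. Handling the unique uncovered vertex $z$ is a secondary but genuine obstacle, since its incident edges are all non-matching and must absorb precisely the weights left over once the matching edges are placed; I would treat its forced central label $m$ as a fixed boundary condition carried through the entire induction.
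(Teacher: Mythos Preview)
Your outline heads in a workable direction---contract the matching, label the quotient, then blow each super-vertex back up into its matched pair---but two genuine ingredients are missing, and without them the plan does not close.

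First, you never control \emph{where} the uncovered vertex $z$ sits. The paper's proof hinges on a preliminary lemma (its Proposition~2.5): given any almost perfect matching of $T$, one can replace it by another almost perfect matching whose sole uncovered vertex is an endpoint of a longest path. Only then is the contraction $T'=T/M$ a caterpillar in which the image of $z$ is a leaf at the end of the spine, so that Rosa's canonical caterpillar labeling can place it at the extreme label. Your induction ``peeling a matched pendant pair'' does not substitute for this: if $z$ happens to be an interior spine vertex or a first-level branch vertex, the peeled tree need not keep $z$ in a position your scheme can handle, and your proposed central label $m$ for $z$ is not forced by anything structural.

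Second, you defer exactly the step that carries all the weight. You write that ``the crux will be choosing the traversal order and the low/high assignment at each branch so that no weight is repeated and none is skipped''---but that \emph{is} the theorem. The paper disposes of this entirely by recognising that the blow-up of a graceful $S$ by copies of $P_2$, with one exceptional vertex left unexpanded, is precisely the Stanton--Zarnke $\Delta_{+1}$ construction (in its Burzio--Ferrarese generalisation). Once $T'$ is a caterpillar with the image of $z$ labeled $n_{T'}-1$, the $\Delta_{+1}$ machinery outputs a graceful labeling of $T$ automatically; no bespoke traversal, no interval invariant, and no induction on $m$ are needed. Incidentally, in that construction $z$ receives the extreme label $2m$ (not $m$), and the matching edges receive the \emph{odd} weights rather than the top half $\{m+1,\dots,2m\}$ you aim for, so your target invariant is not the one that actually arises. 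Your first-paragraph structural claim is also too strong: a spine vertex of a lobster with an almost perfect matching can have several first-level neighbours, each matched to its own leaf, so $T$ does not collapse to ``a spine whose branch vertices each dangle a single matched leaf.''
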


Every tree that accepts the conjectured labeling is known as a \emph{graceful tree}. 

To browse the many attempts to solve this problem and their partial results, see Gallian's Dynamic Survey \cite{Gallian}.

We use the definition of \emph{tree distance} that first appeared in \cite{Morgan}. Let $P$ be a longest path in $T$ and call $T$ a $k$-distant tree if all of its vertices are a distance at most $k$ from $P$. Paths ($0$-distant trees) and caterpillars ($1$-distant trees) were shown to be graceful in \cite{Rosa}. However, the conjecture is unknown for any trees with higher tree distance. For $2$-distant trees, or \emph{lobsters}, the question is a well-known conjecture of Jean-Claude Bermond.

\begin{conj}\cite{Bermond}
Every lobster is graceful.
\end{conj}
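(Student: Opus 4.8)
The plan is to build a graceful labeling of $T$ directly, by traversing the spine and maintaining two label counters whose interaction forces every weight in $\{1, \ldots, n-1\}$ to appear exactly once. First I would fix a longest path $P = v_1 v_2 \cdots v_s$ and record the three-level structure of $T$: each spine vertex $v_i$ carries a set of first-level branch neighbors, and each branch carries a set of second-level leaves. Since $T$ is bipartite, I would work in the spirit of Rosa's $\alpha$-labelings, committing to send one bipartition class toward the small end of $\{0, \ldots, n-1\}$ and the other toward the large end, so that deleting the second-level leaves leaves a caterpillar $\CC$ on the remaining vertices whose labeling I can control by the classical construction.

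The core of the construction is a two-pointer scheme: a low counter $\ell$ initialized at $0$ and a high counter $h$ initialized at $n-1$. Traversing $P$ from $v_1$, I would alternately draw spine labels from $h$ and $\ell$, and to each spine vertex assign its branch neighbors a block of consecutive labels taken from the opposite counter, exactly as in the graceful labeling of $\CC$. The point of the alternation is arithmetic: if at each step the label just assigned differs from the previous one by a controlled amount, the induced weights sweep out $\{1, \ldots, n-1\}$ in a strictly decreasing run. For caterpillars this is Rosa's theorem, so the skeleton is sound; the remaining work is to interleave the second-level leaves into the same sweep.

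The hard part will be inserting the second-level leaves in unbounded number. Each such leaf $u$ hangs from a branch $w$ already bearing a label, and $u$ must receive an unused label whose distance from the label of $w$ is a fresh weight; moreover, adding leaves enlarges $n$, which retroactively shifts both counters and the target weight set, so one cannot append leaves to a fixed caterpillar labeling after the fact. I expect the genuine obstacle to arise when a single branch vertex is heavy: the block of labels assigned to its leaves must simultaneously avoid every previously used label and produce a contiguous run of fresh weights relative to the label of $w$, and these two demands pull the block toward opposite ends of the remaining range. In the near-perfect-matching setting this tension never materializes, since the matching bounds the number of leaves on each branch and fixes the parity of each vertex's partner, letting the two counters be advanced in lockstep. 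Removing that bound is precisely what breaks the lockstep, and controlling the two counters across arbitrarily heavy branches in full generality is the combinatorial core that keeps Bermond's conjecture open.
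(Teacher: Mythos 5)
There is no proof to compare here, and you are right not to pretend otherwise: the statement is Bermond's conjecture, which is open, and the paper does not prove it either --- it cites Bermond and then establishes only the special case of lobsters with an almost perfect matching (Theorem~\ref{almost perfect}). Your final paragraph concedes exactly this, so what you have written is a sketch of the caterpillar case together with a diagnosis of the obstruction, not a proof of the statement. Beyond that honest self-assessment, one ingredient of your plan is provably too strong to ever work in full generality: you commit ``in the spirit of Rosa's $\alpha$-labelings'' to sending one bipartition class toward the low end of $\{0,\dots,n-1\}$ and the other toward the high end, and your two-pointer scheme with counters $\ell$ and $h$ enforces precisely such a separation. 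But it is classical that there exist lobsters admitting no $\alpha$-labeling (see Gallian's survey \cite{Gallian}), so any argument that rigidly segregates the bipartition classes into an initial and a terminal segment of labels cannot gracefully label all lobsters; a correct general approach must allow the two classes' labels to interleave, which is exactly where your ``strictly decreasing run of weights'' invariant breaks.

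Your closing remark about the almost-perfect-matching case also misattributes the mechanism by which that case is actually resolved. The paper does not advance two counters ``in lockstep,'' and the matching hypothesis is not used to bound the number of leaves per branch --- a lobster with an almost perfect matching can still have arbitrarily many leaves on a branch, paired among themselves and with their support vertices. Instead, the paper first relocates the uncovered vertex to an end of a longest path (Proposition~\ref{matchings}), contracts the matching edges to obtain a caterpillar contree, labels that caterpillar by the complement of Rosa's labeling so the exceptional vertex receives the maximum label, and then re-inflates each contracted vertex into an edge via the generalized $\Delta_{+1}$ construction of Burzio--Ferrarese and Superdock with $P_2$ in the role of $T$. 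The matching is what makes the contraction produce a caterpillar and the blow-up well defined; it plays a structural role, not the arithmetic bookkeeping role your sketch assigns it. So even as a description of the known special case, your account would need to be replaced by the contraction--inflation argument rather than repaired.
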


The $\Delta$ construction, used to create larger graceful trees from graceful trees was first defined by Stanton and Zarnke \cite{SZ} in 1973 and has since been modified and generalized by Koh, Rogers, and Tan \cite{KRT} and Burzio and Ferrarese \cite{BF}. It   is one of the earliest yet most robust methods to create graceful trees. 

A construction that has received far less attention is that of Hajo Broersma and Cornelis Hoede \cite{BH} from 1999, who showed an equivalence between the GTC and a more restrictive labeling on trees containing a perfect matching. Their results were unfamiliar enough that in subsequent years David Morgan published a paper proving that all lobsters with a perfect matching are graceful \cite{Morgan}, although this claim was already stated in slightly different language as Corollary 11 in \cite{BH}.

Let $T$ be a tree of order $n$ with a perfect matching $M$. A graceful labeling of $T$, which additionally satisfies the property that for any edge in $M$, the pair of vertices incident to that edge must have a label sum of $n-1$, is called a \emph{strongly graceful} labeling. For any tree $T$ with a perfect matching $M$, the tree resulting from the contraction of the edges of $M$ is called the \emph{contree} of $T$.

\begin{thm}[Broersma-Hoede]\cite{BH}\label{BHEquivalence}
Every tree is graceful if and only if every tree containing a perfect matching is strongly graceful.
\end{thm}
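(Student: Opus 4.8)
The statement is a biconditional, so the plan is to prove the two implications separately, in both cases passing through an explicit correspondence between strongly graceful labelings of a tree $T$ with a perfect matching $M$ and graceful labelings of its contree $C$. Write $|V(T)| = 2m$, so $M$ has $m$ edges and $C$ has $m$ vertices; I note first that contracting a matching in a tree can create neither a cycle nor a multi-edge (a double edge between two contracted pairs would give a cycle in $T$), so $C$ is again a tree. The bridge in each direction is the folding map $\pi \colon \{0,\dots,2m-1\} \to \{0,\dots,m-1\}$, $\pi(x) = \min(x,\, 2m-1-x)$, which identifies $x$ with $2m-1-x$ and therefore descends, along the contraction $T \to C$, to a well-defined map on labels precisely when each matching pair carries labels summing to $2m-1$.

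For the implication that ``every tree with a perfect matching is strongly graceful'' forces ``every tree is graceful'', I would take an arbitrary tree $S$ on $m$ vertices and attach a new pendant vertex at every vertex of $S$; the resulting corona $S \circ K_1$ is a tree on $2m$ vertices whose pendant edges form a perfect matching and whose contree is exactly $S$. A strongly graceful labeling $f$ of $S \circ K_1$ then exists by hypothesis, and I would set $g = \pi \circ f$ on $V(S)$. Conversely, to show that ``every tree is graceful'' forces ``every tree with a perfect matching is strongly graceful'', I would start from $T$ with matching $M$, observe that its contree $C$ is a tree and hence graceful by hypothesis, say with labeling $g$, and then lift $g$ to a candidate labeling of $T$ by giving the pair over $v$ the labels $\{g(v),\, 2m-1-g(v)\}$.

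Both directions thus reduce to a single correspondence lemma: $f$ is strongly graceful on $T$ if and only if $g = \pi \circ f$ is graceful on $C$. The easy bookkeeping is that $\pi \circ f$ is automatically a bijection onto $\{0,\dots,m-1\}$, and that the $m$ matching edges, whose endpoints sum to $2m-1$, receive exactly the $m$ odd weights $1,3,\dots,2m-1$, forcing the $m-1$ non-matching edges to carry the even weights $2,4,\dots,2m-2$. A useful structural observation is that for a non-matching edge the requirement that its $T$-weight be even forces its ``orientation'' (whether its two endpoints lie on the same side or on opposite sides of the fold $\pi$) according to the parities of the two contree labels $g(u), g(v)$ that it joins; and because $C$ is a tree, the resulting system of orientation constraints is an acyclic XOR system and is therefore always consistently solvable when lifting.

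The main obstacle is the remaining weight-completeness statement: that this forced assignment actually realizes each even value in $\{2,4,\dots,2m-2\}$ exactly once, equivalently that the folded weights are pairwise distinct. This does not follow edge-by-edge, since two contree edges can share a difference $|g(u)-g(v)|$ while differing in $T$-weight, or can share a value $2m-1-(g(u)+g(v))$ while differing in difference; it genuinely uses the global completeness of the graceful labeling on $C$. I expect to prove it by splitting the contree edges by the parity of $|g(u)-g(v)|$ — the even-difference edges keeping weight $|g(u)-g(v)|$ and the odd-difference edges contributing $2m-1-(g(u)+g(v))$ — and showing by a counting argument across these two parity classes (or by induction on the leaves of $C$, adjoining one pendant pair to $T$ at a time) that together they hit each even weight once. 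This coverage argument is where essentially all of the work lies.
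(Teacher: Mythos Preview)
The paper does not prove this theorem; it is quoted from \cite{BH} as background, so there is no in-paper argument to compare against. Your proposal, however, has a genuine gap: the folding map $\pi(x)=\min(x,2m-1-x)$ does not set up the correspondence you want, and the weight-completeness statement you isolate as ``the main obstacle'' is in fact false for this $\pi$, so no counting or induction can establish it. Take $T=P_6$ with the alternating matching $M=\{v_1v_2,v_3v_4,v_5v_6\}$ and contree $C=P_3$. The labeling $f=(0,5,1,4,2,3)$ along the path is strongly graceful, yet $\pi\circ f$ labels $C$ by $0,1,2$, with repeated edge weight $1$. Conversely, the graceful labeling $g=(0,2,1)$ of $C$ lifts under your recipe to the pairs $\{0,5\},\{2,3\},\{1,4\}$ on $T$, and for each of the two non-matching edges the only attainable even weight is $2$; no strongly graceful labeling results. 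Both directions of your correspondence lemma fail on this example.

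The repair is to change the identification. Since the two labels in a matching pair sum to the odd number $2m-1$, exactly one of them is even; set $g(v)$ to be half of that even label (equivalently, lift $g(v)$ to the pair $\{2g(v),\,2m-1-2g(v)\}$). A non-matching edge of $T$ must then join two even labels or two odd labels --- a mixed edge would have odd weight --- and in either case its weight is exactly $2|g(u)-g(v)|$. Thus the non-matching weights of $T$ are twice the edge weights of $C$, and weight-completeness on $T$ is \emph{equivalent} to gracefulness of $g$ on $C$, with nothing further to check. Your XOR/tree-propagation argument for consistently choosing sides is correct and is exactly what the lift direction needs; only the choice of $\pi$ has to change.
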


In this work, we apply the approach of Matt Superdock\cite{Superdock} who consolidated the constructions of \cite{BH} and \cite{BF} and extended Theorem \ref{BHEquivalence}.

\begin{defn}
A tree $T$ is \emph{0-rotatable} if for every vertex of $v$ of $T$, there exists a graceful labeling $f$ so that $f(v)=0$.
\end{defn}

\begin{thm}[Superdock]
If the tree $T$ has an almost perfect matching, and the contree of $T$ is $0$-rotatable, then $T$ is graceful.
\end{thm}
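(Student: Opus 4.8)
The plan is to realise a graceful labeling of $T$ by \emph{doubling} a graceful labeling of its contree. Write $n=2m+1$ for the order of $T$, let $M$ be the almost perfect matching (so $|M|=m$ and exactly one vertex $v_0$ is left uncovered), and let $C=T/M$ be the contree, a tree on $m+1$ vertices whose edges are in bijection with the $m$ non-matching edges of $T$. Since $C$ is $0$-rotatable, I would first fix a graceful labeling $g\colon V(C)\to\{0,1,\dots,m\}$ with $g(v_0)=0$; the differences $|g(c)-g(c')|$ over the edges $cc'$ of $C$ then run exactly through $\{1,2,\dots,m\}$. The requirement $g(v_0)=0$ is exactly what $0$-rotatability buys us, and it is forced: $v_0$ is the only vertex of $T$ receiving a single label, so it must absorb the label $0$ in order for the remaining labels to balance out, as the bookkeeping below shows.

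The central device is a two-colouring $\chi\colon V(T)\to\{\textup{even},\textup{odd}\}$ that is \emph{bichromatic on the edges of $M$ and monochromatic on every non-matching edge}. Such a $\chi$ exists because $T$ is a tree: reading each matching edge as the constraint $\chi(u)\neq\chi(v)$ and each non-matching edge as $\chi(u)=\chi(v)$ gives a system of parity constraints over $\mathbb{Z}/2$ with no cycle to obstruct it, so one may simply propagate a choice outward from $v_0$. Using the single global flip that remains, I would choose $\chi$ so that $v_0$ is coloured \textup{even}. Each matching edge now has one even and one odd endpoint, so, together with $v_0$, the even class has size $m+1$ and the odd class size $m$. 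I then label $T$ by the doubling rule: a vertex lying in contree-class $c$ receives $2g(c)$ if it is even and $2m+1-2g(c)$ if it is odd.

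It remains to check that this is a graceful labeling, which is where the arithmetic must be pinned down. The even labels are $\{2g(c):c\in V(C)\}=\{0,2,\dots,2m\}$, and the odd labels are $\{2m+1-2g(c):c\neq v_0\}$; because $g(v_0)=0$, the value $2m+1$ that would fall outside $\{0,\dots,2m\}$ is precisely the one omitted, so the odd labels are exactly $\{1,3,\dots,2m-1\}$ and all of $\{0,1,\dots,2m\}$ is used once. Each non-matching edge is monochromatic and corresponds to an edge $cc'$ of $C$, so its weight is $|2g(c)-2g(c')|=2|g(c)-g(c')|$; as $cc'$ ranges over $E(C)$ these give exactly the even weights $\{2,4,\dots,2m\}$. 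Each matching edge of class $c$ joins $2g(c)$ to $2m+1-2g(c)$ and so has weight $|4g(c)-(2m+1)|$. The main obstacle, and the one computation I would carry out in full, is to verify that as $g(c)$ runs over $\{1,\dots,m\}$ these values realise each odd number in $\{1,3,\dots,2m-1\}$ exactly once; this follows from a short injectivity-plus-counting argument, since $j\mapsto|4j-(2m+1)|$ is injective (the alternative $4j+4j'=2(2m+1)$ is impossible by parity), lands among the odd numbers not exceeding $2m-1$, and there are $m$ of each. Combining the two families, the weights of $T$ are precisely $\{1,2,\dots,2m\}$, so $T$ is graceful.
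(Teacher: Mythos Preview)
Your argument is correct. The paper does not itself prove this theorem (it cites Superdock), but the machinery it lays out---the generalized $\Delta_{+1}$ construction of Burzio--Ferrarese applied with the contree as $S$ and $P_2$ as the inner tree---is exactly what your explicit labeling unpacks. Your ``doubling'' rule $x\mapsto 2g(c)$ or $2m+1-2g(c)$ is the formula $g_i(x)=in_T+g(x)$ or $(n_S-i-2)n_T+g(x)$ specialised to $n_T=2$ and read through the complementary labeling (which is why you can take $g(v_0)=0$ while the paper's version wants the exceptional vertex at the top label). Your two-colouring $\chi$, with matching edges bichromatic and non-matching edges monochromatic, is precisely the Burzio--Ferrarese freedom to connect adjacent copies of $P_2$ at ``the same vertex in each copy''; your observation that $\chi$ exists because a tree has no cycles to obstruct the parity constraints makes explicit a step the paper leaves implicit when it simply asserts that the original tree is recovered by the construction. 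So the two routes are the same labeling in different clothes: the paper's presentation is modular and invokes a general theorem, while yours is self-contained and exposes the arithmetic directly.
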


As a consequence of our analysis, we answer Conjecture $2.14$ of \cite{BK},

\begin{thm}\label{almost perfect}
Every lobster with a matching that covers all but one vertex, that is, an almost perfect matching, is graceful.
\end{thm}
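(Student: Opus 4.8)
The plan is to deploy Superdock's theorem: since $T$ is assumed to carry an almost perfect matching, it suffices to produce one almost perfect matching $M$ whose contree $C$ is $0$-rotatable. I would argue in two stages. First, that the contree of any lobster equipped with an almost perfect matching is a caterpillar; second, that every caterpillar is $0$-rotatable. Granting both, Superdock's theorem yields at once that $T$ is graceful, answering Conjecture $2.14$ of \cite{BK}.

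For the structural stage, fix a longest path $P$ of $T$ and recall that in a lobster every vertex lies within distance $2$ of $P$, with all distance-$2$ vertices being leaves. Contraction of $M$ cannot increase distances, so writing $P_C$ for the image of $P$, I would first note that for any edge $ab \in M$ the distances $\operatorname{dist}(a,P)$ and $\operatorname{dist}(b,P)$ differ by at most $1$ and cannot both equal $2$, since two adjacent distance-$2$ leaves would constitute an isolated edge. Hence every vertex of $C$ arising from a matched edge lies within distance $1$ of $P_C$, and the only candidate for a distance-$2$ vertex of $C$ is the image of the unique $M$-unmatched vertex. I would then determine when this exceptional vertex can genuinely sit at distance $2$: it must be a leaf whose parent $p$ is a distance-$1$ vertex already matched to a second leaf, so that $p$ carries at least two leaf children. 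A short counting argument on the legs meeting $P$ then shows that such a configuration, when attached to the interior of $P$, admits no almost perfect matching at all; so either the offending leg can be absorbed into a rerouted spine or the configuration simply does not occur. In every surviving case $C$ lies within distance $1$ of some path, and is therefore a caterpillar.

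The heart of the argument is the second stage: that every caterpillar is $0$-rotatable. Given a caterpillar $D$ on $m$ vertices and a prescribed target vertex $v$, I must construct a graceful labeling $f$ with $f(v)=0$. The natural starting point is Rosa's interlacing labeling of caterpillars, which places $0$ at an end of the spine by alternately assigning small and large labels while traversing the spine and its legs; the task is to rotate the role of $0$ onto $v$. I would split into the cases where $v$ lies on the spine and where $v$ is a leg, in each recording that the neighbor of $v$ receiving label $m-1$ supplies the edge of weight $m-1$, and then partitioning the remaining labels into a low block and a high block distributed along the two arms emanating from $v$, so that consecutive spine weights step down by one while the legs absorb the residual weights. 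Verifying that the resulting weight multiset is exactly $\{1,\dots,m-1\}$ is the delicate bookkeeping, and this is where I expect the main obstacle to lie: the freedom to anchor $0$ at an arbitrary interior vertex must be reconciled with the rigidity forced by graceful weights, and the two arms of the spine must be balanced so that no weight is repeated or omitted.

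Combining the stages, the chosen matching $M$ has caterpillar contree $C$, which by the key lemma is $0$-rotatable, so Superdock's theorem applies and $T$ is graceful. I view the $0$-rotatability of caterpillars as the principal difficulty; the structural reduction, while demanding care around the single unmatched vertex, should be comparatively soft, resting only on the distance estimate above and the observation that the obstructing interior legs are incompatible with an almost perfect matching.
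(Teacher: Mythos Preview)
Your plan diverges from the paper's at precisely the point you flag as the main obstacle, and that obstacle is real: establishing that \emph{every} caterpillar is $0$-rotatable is substantially stronger than Rosa's theorem, is not a standard result you can simply cite, and your sketch of the two-arm balancing argument is far from a proof. The paper never attempts this. Instead it exploits the freedom in the \emph{choice of matching}: Proposition~\ref{matchings} shows that any tree with an almost perfect matching admits one whose unique uncovered vertex $u$ lies at the end of a path of maximum length. With this particular matching, the contree $T'$ is a caterpillar in which the exceptional vertex $u$ sits at the end of a longest path, and Rosa's original interlacing labeling already places $0$ there; passing to the complementary labeling puts the maximum label at $u$, and the generalized $\Delta_{+1}$ construction (with $T'$ in the role of $S$ and $P_2$ in the role of the replicated tree) finishes the job. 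No $0$-rotatability beyond this single classically available vertex is required.

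This same manoeuvre also dissolves your structural stage. Once $u$ is an endpoint of a longest path $P$, every distance-$2$ leaf of the lobster is covered by $M$ and hence matched to its unique (distance-$1$) neighbour; contraction therefore pulls every off-spine vertex to within distance~$1$ of the image of $P$, and $u$ lands at the end of the resulting spine. The rerouting and parity-counting arguments you outline for the exceptional leg are then unnecessary. The missing idea in your proposal is exactly Proposition~\ref{matchings}: shifting the flexibility from the labeling side to the matching side collapses your hard lemma back to Rosa's original theorem.
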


For an excellent and detailed account of the current state of the GTC along with some new results, we refer the reader to Matt Superdock's senior thesis \cite{Superdock}, from which we borrow definitions and notation. 

For basic graph theoretic concepts, see The Book \cite{West}.

\section{$\Delta$ Constructions and Beyond}
\begin{defn}\cite{SZ}
Let $S$ and $T$ be two trees and let $v$ be a vertex of $T$. Replace each vertex of $S$ by a copy of $T$ by identifying each vertex of $S$ with the vertex corresponding to $v$ in the distinct copy of $T$. Denote the resulting tree by $S \Delta T$.  
\end{defn}

\begin{thm}\cite{SZ}
If $S$ and $T$ are graceful, then $S \Delta T$ is graceful.
\end{thm}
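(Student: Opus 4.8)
The plan is to prove this classical theorem by exhibiting an explicit graceful labeling of $S\Delta T$ assembled from graceful labelings of the two factors. Write $m=|V(S)|$ and $q=|V(T)|$, and fix graceful labelings $g\colon V(S)\to\{0,\dots,m-1\}$ and $h\colon V(T)\to\{0,\dots,q-1\}$. Since $T$ is a tree it is bipartite; let $V(T)=A\sqcup B$ be its bipartition. Each vertex of $S\Delta T$ is a pair $(u,w)$, where $u\in V(S)$ names the copy of $T$ and $w\in V(T)$ names the vertex within that copy (the copies being glued along the vertices corresponding to the fixed $v\in V(T)$). I would define
\[
f(u,w)=\begin{cases} q\,g(u)+h(w), & w\in A,\\[2pt] q\,\bigl(m-1-g(u)\bigr)+h(w), & w\in B.\end{cases}
\]

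First I would check that $f$ is a bijection onto $\{0,\dots,mq-1\}$. Reading labels in base $q$, the residue of $f(u,w)$ modulo $q$ is $h(w)$ and the quotient is either $g(u)$ or $m-1-g(u)$; since $h$ is a bijection onto $\{0,\dots,q-1\}$ and, for each fixed $w$, the quotient ranges bijectively over $\{0,\dots,m-1\}$ as $u$ varies, the pair (quotient, residue) hits every element of $\{0,\dots,m-1\}\times\{0,\dots,q-1\}$ exactly once. Next I would split the edges of $S\Delta T$ into the $m-1$ \emph{between-copy} edges (one per edge of $S$, joining the two copies of $v$) and the $m(q-1)$ \emph{within-copy} edges (the edges of $T$ inside each copy). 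A between-copy edge joining copies $u_1,u_2$ has weight $q\,|g(u_1)-g(u_2)|$ regardless of whether $v\in A$ or $v\in B$, so as the $S$-edge varies these weights run over $q\cdot\{1,\dots,m-1\}$, i.e.\ exactly the nonzero multiples of $q$ in $\{1,\dots,mq-1\}$.

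The crux is the within-copy edges. A $T$-edge $w_1w_2$ with $w_1\in A$, $w_2\in B$ placed in copy $u$ receives weight $\bigl|\,q\,(2g(u)-(m-1))+(h(w_1)-h(w_2))\,\bigr|$. I would prove these $m(q-1)$ numbers are exactly the non-multiples of $q$ in $\{1,\dots,mq-1\}$, which is the heart of the argument. Each such weight is congruent to $h(w_1)-h(w_2)\not\equiv 0 \pmod q$, hence a non-multiple of $q$, and is bounded by $q(m-1)+(q-1)=mq-1$; since there are exactly $m(q-1)$ non-multiples of $q$ in the range, it suffices to show no two coincide. Here I would use two facts: the quantities $2g(u)-(m-1)$ all share the single parity of $m-1$, and the signed differences $h(w_1)-h(w_2)$ over the edges of $T$ have pairwise distinct absolute values (they realize $\{1,\dots,q-1\}$). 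A short case analysis on $q\,d+\delta=\pm(q\,d'+\delta')$ then forces, in the $+$ case, $d=d'$ and $\delta=\delta'$ by the parity bound $|d-d'|<2$, and rules out the $-$ case entirely, since it would require two distinct $T$-edges with equal $|h(w_1)-h(w_2)|$.

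The main obstacle is precisely this no-collision verification for the within-copy weights; once it is in place, the between-copy weights (the multiples of $q$) and the within-copy weights (the non-multiples of $q$) partition $\{1,\dots,mq-1\}$, so $f$ is graceful and $S\Delta T$ is graceful. I expect the only subtlety beyond the parity argument to be bookkeeping the orientation convention $w_1\in A$, $w_2\in B$ consistently and confirming the between-copy formula is insensitive to the class of $v$; both are routine once the reflection $g\mapsto m-1-g$ on the $B$-side is fixed.
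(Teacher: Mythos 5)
Your proposal is correct and takes the same approach as the paper: your labeling $f(u,w)=q\,g(u)+h(w)$ on $A$ and $q\,(m-1-g(u))+h(w)$ on $B$ is exactly the paper's $g_i(x)=in_T+g(x)$, $(n_S-i-1)n_T+g(x)$ with $i=g(u)$. The paper merely records this formula without verification, and your checks — bijectivity via the base-$q$ quotient/residue decomposition, between-copy weights giving the multiples of $q$, and the parity-plus-distinct-absolute-differences argument ruling out within-copy collisions — correctly supply the omitted details.
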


Suppose $S$ and $T$ have orders $n_S$ and $n_T$, respectively, with graceful labelings $f$ and $g$. The construction used to prove the above theorem requires $n_S$ copies of $T$, each substituted for a vertex of $S$. Let $(A,B)$ be a bipartition of $T$. For $0\leq i \leq n_S-1$, the following function labels the vertices of each copy of $T$. The index corresponds to the label of $S$ taken from the value of $f$ on the vertex of $S$ into which we substitute the copy of $T$.
  \[ g_i(x) = \left\{\begin{array}{ll}
    in_T+g(x), & \text{ if } x\in A\\
    (n_S - i -1)n_T+g(x), & \text{ if } x \in B\\
      \end{array}\right. \]

\begin{defn}\cite{SZ}
Let $S$ and $T$ be trees and let $u,v$ be vertices of $S$ and $T$, respectively. Replace each vertex of $S$, other than the exceptional vertex $u$, by a copy of $T$ by identifying each vertex of $S$ with the vertex corresponding to $v$ in the distinct copy of $T$. Denote the resulting tree by $S\Delta_{+1}T$.
\end{defn}

\begin{thm}\cite{SZ}
If $S$ of order $n_S$ and $T$ are trees with graceful labelings $f$ and $g$, where $f(u)=n_S-1$ and $g(v)=0$, then $S\Delta_{+1}T$ is graceful.
\end{thm}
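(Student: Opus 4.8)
The plan is to exhibit an explicit graceful labeling of $S \Delta_{+1} T$ by adapting the Stanton--Zarnke labeling $g_i$ to account for the single copy that is \emph{not} inserted. Write $M = (n_S - 1)n_T$ for the number of edges of $S \Delta_{+1} T$, so that a graceful labeling must assign the labels $\{0, 1, \dots, M\}$ and realize every weight in $\{1, \dots, M\}$ exactly once. Fix the bipartition $(A,B)$ of $T$ with $v \in A$; since $g(v) = 0$ this costs nothing. Because $f(u) = n_S - 1$, the copies of $T$ that are actually inserted are indexed by $i = f(s) \in \{0, 1, \dots, n_S - 2\}$ as $s$ ranges over the vertices of $S$ other than $u$, and I would label the copy with index $i$ by
\[ h_i(x) = \begin{cases} i\,n_T + g(x), & x \in A, \\ (n_S - i - 2)\,n_T + g(x), & x \in B, \end{cases} \]
while assigning $u$ the single label $h(u) = (n_S - 1)n_T = M$.

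First I would confirm that $h$ is a bijection onto $\{0, \dots, M\}$. Splitting $\{0, \dots, M - 1\}$ into the $n_S - 1$ consecutive blocks of length $n_T$, the $j$th block $\{j\,n_T, \dots, (j+1)n_T - 1\}$ receives the $A$-labels of copy $j$ together with the $B$-labels of copy $n_S - 2 - j$, and these fill it exactly because $\{g(x) : x \in A\} \cup \{g(y) : y \in B\} = \{0, \dots, n_T - 1\}$; the vertex $u$ then takes the one remaining top label $M$. Next I would treat the two edge families separately. An edge inherited from $S$ joins two $v$-vertices, or $u$ and a $v$-vertex; since $v \in A$ and $g(v) = 0$, the $v$-vertex of copy $i$ carries $i\,n_T$ and $u$ carries $f(u)\,n_T$, so every such edge has weight $|f(s_1) - f(s_2)|\,n_T$. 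As $f$ is graceful on $S$, these are precisely the multiples $\{n_T, 2n_T, \dots, (n_S - 1)n_T\}$ of $n_T$ lying in $[1, M]$.

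The remaining edges lie inside the copies: an edge $xy$ of $T$ with $x \in A$ and $y \in B$ contributes, inside copy $i$, the weight $|(2i - n_S + 2)n_T + (g(x) - g(y))|$, and since $0 < |g(x) - g(y)| < n_T$ none of these is a multiple of $n_T$. The main obstacle is to show that as $i$ ranges over $\{0, \dots, n_S - 2\}$ and $xy$ over the edges of $T$, these weights realize each non-multiple of $n_T$ in $[1, M]$ exactly once. The key observation is that the displayed expression is, index for index, identical to the within-copy weight produced by the ordinary $\Delta$ labeling built from $n_S - 1$ copies of $T$; that this family exhausts the non-multiples of $n_T$ in $[1, (n_S - 1)n_T]$ is exactly the within-copy half of Stanton and Zarnke's verification, and since it is insensitive to the structure of the base tree I would import that counting directly. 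Combining the two families yields every weight in $\{1, \dots, M\}$ once, so $h$ is graceful. Throughout, the delicate bookkeeping occurs at the omitted copy, and the hypotheses $f(u) = n_S - 1$ and $g(v) = 0$ are exactly what force the inserted copies to occupy the clean index interval $\{0, \dots, n_S - 2\}$ and leave $u$ alone at the top label $M$, so that the block-filling and the weight count close without collision.
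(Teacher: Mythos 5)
Your proposal is correct and matches the paper's approach exactly: the labeling $h_i$ you define (with the $(n_S-i-2)n_T$ shift on $B$ and the label $(n_S-1)n_T$ on $u$) is precisely the function the paper displays for this theorem, and your reduction of the within-copy weights to the ordinary $\Delta$ construction on a base of order $n_S-1$ is the intended justification. The paper itself only states the labeling and cites Stanton--Zarnke; you have supplied the verification (block-wise bijectivity, the multiples of $n_T$ from the edges of $S$, and the imported within-copy count) that it leaves implicit.
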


This construction is almost identical to the one above.  Let $(A,B)$ be a bipartition of $T$ with $v\in A$. The labeling function follows, other than the label on $u$, which is $(n_S-1)n_T$.
  \[ g_i(x) = \left\{\begin{array}{ll}
    in_T+g(x), & \text{ if } x\in A\\
    (n_S - i -2)n_T+g(x), & \text{ if } x \in B\\
      \end{array}\right. \]

Burzio and Ferrarese \cite{BF} generalized both of these constructions. To improve the $\Delta$ construction, consider two adjacent vertices of $S$ into which we substitute copies of $T$. Notice that we may connect two such copies of $T$ by an edge between {\bf any} two vertices that correspond to the same vertex in each copy. For the $\Delta_{+1}$ construction, the above observatoin holds for any adjacent pair of vertices of $S$ other than the exceptional vertex $u$, which must still be adjacent to the vertices corresponding to the fixed vertex $v$ of $T$.

\begin{prop}\label{matchings}
Let $T$ be a tree with an almost perfect matching $M$. For every path of maximum length with end vertex $v$, $T$ accepts an almost perfect matching which covers all vertices but $v$.
\end{prop}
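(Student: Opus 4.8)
The plan is to start from an arbitrary almost perfect matching and \emph{relocate} its exposed vertex to $v$ by means of $M$-alternating paths, letting the maximality of the path supply the local structure we need. Fix a path of maximum length $P = v_0 v_1 \cdots v_k$ with $v = v_0$, and let $M$ be an almost perfect matching of $T$ exposing some vertex $u$. Writing $(A,B)$ for the bipartition of $T$, every almost perfect matching saturates the smaller class, so the exposed vertex always lies in the larger class; a first step is therefore to check, using the maximality of $P$, that $v$ lies in this larger class (so that exposing $v$ is not excluded on parity grounds). The other structural input I would extract from maximality is that $v_0$ is a leaf all of whose ``siblings'' are leaves: every neighbor of $v_1$ other than $v_2$ has degree one, since otherwise $P$ could be lengthened. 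The engine of the argument is the observation that moving the hole from $u$ to $v$ is the same as producing an $M$-alternating $u$--$v$ path that begins with a non-matching edge and ends with a matching edge; taking the symmetric difference of $M$ with such a path covers $u$ and exposes exactly $v$.

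If $v$ is already exposed by $M$ we are done, so assume $v_0 v_1 \in M$. Because $T$ is a tree the $u$--$v$ path is unique, and the task becomes to adjust $M$ so that this unique path alternates correctly. The clean case is when some leaf-neighbor $\ell \neq v_0$ of $v_1$ is exposed: then
\[
M' = \bigl(M \setminus \{v_0 v_1\}\bigr) \cup \{v_1 \ell\}
\]
is an almost perfect matching exposing $v$. A short counting argument shows this handles every configuration in which $v_1$ has an exposed leaf-neighbor, since a second exposed leaf at $v_1$ would force two uncovered vertices, contradicting that $M$ is almost perfect. Thus one reduces to the situation where $v_1$ has no exposed leaf-neighbor, which, combined with fact (i), forces $v_1$ to have degree two with $v_0 v_1 \in M$.

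This degree-two case is where I expect the real difficulty to lie. The natural move is to delete $v_0$ and $v_1$, note that $M \setminus \{v_0 v_1\}$ is an almost perfect matching of $T - v_0 - v_1$ exposing the same vertex, relocate its hole to $v_2$, and then restore $v_1 v_2$ so as to expose $v_0$. The obstacle is that $v_2$ need not be the endpoint of a longest path of $T - v_0 - v_1$, so a naive induction on the Proposition does not close. The honest fix is to carry a stronger inductive invariant: rather than re-invoking the longest-path hypothesis after each deletion, one tracks a single global $M$-alternating path terminating at $v$ and uses bipartite parity to keep the exposed vertex in the major class at every stage. Verifying that, in this degree-two configuration, the hole can in fact always be routed all the way to $v$ is the crux of the argument, and it is here that the maximality of $P$ must be used most carefully.
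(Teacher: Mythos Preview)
Your proposal is not a complete proof, and you say as much: the degree-two case is left as ``the crux of the argument'' with only a gesture toward an unspecified ``stronger inductive invariant.'' More seriously, the parity check you propose as a first step --- that any endpoint $v$ of a longest path must lie in the larger bipartite class --- is \emph{false}. Take $T$ to be the path $v_0v_1v_2v_3$ with an extra leaf $w$ attached at $v_2$. Then $\{v_0v_1,\,v_2v_3\}$ is an almost perfect matching, $v_0$ is an endpoint of the longest path $v_0v_1v_2v_3$, yet $v_0$ lies in the smaller class $\{v_0,v_2\}$, and indeed no almost perfect matching of $T$ exposes $v_0$ (matching $v_1$ to $v_2$ leaves both $v_3$ and $w$ uncovered). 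So the statement read as ``for \emph{every} endpoint of \emph{every} longest path'' is simply not true, and no alternating-path argument will rescue it; what is actually needed for the application is the existential version --- \emph{some} endpoint of a longest path can be exposed --- and that is what you should be aiming to prove.

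The paper's argument is quite different from yours and sidesteps these issues. It inducts on $|T|$: since every leaf edge lies in $M$ once the exposed vertex is not a longest-path endpoint, one removes a matched leaf edge $e$ at one end of a chosen longest path $P$, together with its two vertices, to get a smaller tree $T'$ that still carries an almost perfect matching. Either $P\setminus e$ remains a longest path of $T'$, in which case induction exposes its far endpoint, or $T'$ contains another path $Q$ of the same length as $P$, and induction exposes an endpoint of $Q$, which is also a longest-path endpoint of $T$. Restoring $e$ to the matching finishes the job. The point is that the paper never tries to route the hole through $T$ along an alternating path; it peels off a matched leaf edge and lets induction supply the exposed vertex, which is both simpler and immune to the parity obstruction your approach runs into.
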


\begin{proof}
We induct on the order $n$ of $T$, which we notice must be odd. The statement is trivial for $n=3$. Next, suppose the theorem holds for all trees of order less than $n$. Let $T$ be a tree of order $n$ with an almost perfect matching $M$. If the vertex not covered by $M$ is an end vertex on a path of maximum length, then $M$ is the required matching, so suppose this is not the case. Notice that every leaf edge of $T$ must be in $M$. Choose a path of maximum length, $P$, and remove a leaf edge $e$ and its incident vertices to form the tree $T'=T\backslash \{e\}$. By the induction hypothesis, the theorem holds for $T'$. 

If the remaining portion of $P$ in $T'$, that is, $P\backslash\{e\}$, is not a path of maximum length in $T'$, then $T'$ contains a path $Q$ of the same length as $P$. By the induction hypothesis, we can produce an almost perfect matching $M'$ of $T'$ excluding an end vertex of $Q$. This means that $M'\cup\{e\}$ is an almost perfect matching for $T$ satisfying the necessary conditions.

If $P\backslash\{e\}$ is a path of maximum length in $T'$, then we apply the induction hypothesis to produce a almost perfect matching $M'$ of $T$ which excludes an end vertex of $P\backslash\{e\}$ which is not incident to $e$ in $P$. Again, $M'\cup\{e\}$ is the required almost perfect matching for $T$. 
\end{proof}

\begin{proof}{(of Theorem \ref{almost perfect})}
Let $T$ be a lobster with an almost perfect matching. By Proposition \ref{matchings}, we can find an almost perfect matching $M$ of $T$ that covers all but an end vertex $u$ on a path of maximum length. Contract the edges of $M$ to form the caterpillar $T'$. Rosa showed that caterpillars are graceful \cite{Rosa} and produced a graceful labeling in which any end vertex on a path of maximum length may be labeled $0$. Notice that for any graceful labeling $f$ of a tree with $m$ edges, the \emph{complementary labeling} $\bar{f}$ is defined by $\bar{f}(v)=m-f(v)$ for each vertex $v$ of the tree, is graceful. We label $T'$ by the complementary labeling of Rosa's labeling, so that $u$ receives the maximum label. Notice that $u$ serves as our exceptional vertex in a generalized $\Delta_{+1}$ construction with $T'$ as $S$ and $P_2$ as $T$.
\end{proof}

\end{document}